\newcommand{\RR}{\mathbb{R}}
\newtheorem{defn}{Definition}[section]
\newtheorem{thm}[defn]{Theorem}
\newcommand{\is}{\simeq}
\title{On the Number of Edges in Maximally Linkless Graphs}
\author{Max Aires \\ 
Dept.\ Math.\ Sciences \\ Carnegie Mellon University \\ Pittsburgh, PA\\ 
maires@andrew.cmu.edu}
\begin{document}

\maketitle

\begin{abstract}
\noindent
A maximally linkless graph is a graph that can be embedded in $\RR^3$ without any links, but cannot be embedded in such a way if any other edge is added to the graph.
Recently, a family of maximally linkless graphs was found with $m=3n-3$ edges. We improve upon this by demonstrating a new family of maximally linkless graphs with $m\le \frac{14}{5}n$ edges. 
\end{abstract}


\section{Introduction}

A graph can be \begin{it}embedded\end{it} in $\RR^3$ by taking points as vertices and curves between those points as edges, where no edge intersects another edge or vertex except at its endpoints. Two cycles in a graph embedding are \begin{it}{linked}\end{it} if they cannot be continuously moved to create two disjoint cycles on a common plane without their edges crossing. A graph which can be embedded in $\RR^3$ with no linked cycles is called \begin{it}linkless\end{it}. We remark that in general a link in an embedded graph could involve more than two cycles, but the minor characterization of Robertson, Seymour, and Thomas~\cite{RST} shows that it suffices to consider pairs of disjoint cycles. 

A graph embedding is \begin{it}flat\end{it} if for each cycle there exists a disk with the cycle as its boundary that is otherwise disjoint from the embedding. Robertson, Seymour, and Thomas showed that a graph has a flat embedding if and only if it has a linkless embedding~\cite{RST}.

In 1983, Sachs showed that $K^6$ and six other graphs are not linkless~\cite{Sachs}. Moreover, he observed that the family of linkless graphs is closed under the operations of taking minors and $Y-\Delta$ transforms. Linkless graphs share some analogous properties with planar graphs. Robertson, Seymour, and Thomas extended Sachs' result by showing that linkless graphs can be characterized precisely as any graph which does not contain any of seven forbidden graph minors~\cite{RST}. This result is analogous to the result that planar graphs are characterized by the forbidden minors of $K^5$ and $K_{3,3}$.

The similarities between planar and linkless graphs suggest trying to generalize other properties, particularly the structure of maximally planar graphs. We say a planar graph is \begin{it}maximally\end{it} planar if adding any edge makes the graph nonplanar. Any planar graph on $n$ vertices has a maximum of $3n-6$ edges. Moreover, any maximally planar is a triangulation, and has the full $3n-6$ edges. By a classical result of Mader, any graph on $n$ vertices with no $K^6$ minor, and hence any linkless graph, has at most $4n-10$ edges~\cite{Mader}. Some linkless graphs do in fact have this many edges, such as \begin{it}apex\end{it} graphs, which consist of a maximally planar graph with an extra vertex connected to every vertex in the planar graph. 

A class of maximally linkless graphs was recently found by Dehkordi and Farr on $n$ vertices with $m= 3n-3$ edges~\cite{Dehkordi/Farr}. They asked whether this was a lower bound; we shall show it is not. We will present a graph on $13$ vertices with $31$ edges, and will show that this gives a way of generating maximally linkless graphs with less than $\frac{14}{5}n$ edges for arbitrarily large $n$. This result highlights a key difference between planar and linkless graphs: while maximally planar graphs have a very simple structure and predictable number of edges, maximally linkless graphs have no such structure, and the number of edges can vary significantly. In particular, a maximally linkless graph on $n$ vertices can have less vertices than a maximally planar graph on $n$ vertices, a surprising result.

\section{Main Results}

\begin{thm} The graph $G$ below is maximally linkless.
\end{thm}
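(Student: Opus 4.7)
My plan is to verify the two halves of ``maximally linkless'' separately. For the linkless direction, I would appeal to the Robertson--Seymour--Thomas equivalence between linkless and flat embeddings~\cite{RST} and exhibit a flat embedding of $G$ explicitly. A natural construction is to identify a maximal planar subgraph $H \subseteq G$ (so $|E(H)| \le 3 \cdot 13 - 6 = 33$, which is more than $31$, giving ample room to hope $G$ itself is close to planar or can be split into a planar part plus a few chords), draw $H$ on a sphere $S^2 \subset \RR^3$, and then route the remaining edges as disjoint arcs through the interior faces of $S^2$. Flatness can then be verified cycle by cycle: each triangle of $H$ bounds a disk in $S^2$, and each new cycle created by an extra edge bounds a disk obtained by gluing together the arc with a planar disk of $H$.

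For maximality, I must show that for every non-edge $e = uv$ of $G$, the graph $G + e$ fails to be linkless. By~\cite{RST}, it suffices to exhibit, for each such $e$, a minor from the Petersen family---typically $K^6$ or $K_{3,3,1}$. The number of non-edges is $\binom{13}{2} - 31 = 47$, which is unwieldy to handle one by one, so the first step is to compute $\mathrm{Aut}(G)$ and partition the non-edges into orbits. If $G$ is constructed with nontrivial symmetry (as is standard for small extremal graphs), this should collapse the casework to a small number of orbit representatives. For each representative I would provide explicit branch sets: connected subgraphs $V_1,\dots,V_6$ of $G+e$ that are pairwise adjacent (for a $K^6$ minor), or an analogous configuration for $K_{3,3,1}$.

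The main obstacle will be the maximality step, both in determining a tight enough automorphism group and in uniformly finding the forbidden minors. In practice, I expect $e$ to sit inside many short cycles of $G$, and contracting appropriate edges in $G+e$ along those cycles should expose a $K^6$; but for certain non-edges it may be necessary to contract more carefully to produce $K_{3,3,1}$ instead. A secondary obstacle is ruling out that $G$ is already non-linkless: this is subsumed by the flat embedding above, but one should double-check it by verifying that $G$ contains no Petersen-family minor (or equivalently, that the embedding produced really is flat). The linkless-embedding half, by contrast, should be comparatively routine once a suitable planar skeleton inside $G$ is identified.
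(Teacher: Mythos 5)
Your maximality half is essentially the paper's argument: the paper computes the automorphism swapping $A,B,C,D$ with $A',B',C',D'$ (cutting the $47$ non-edges to $26$ orbit representatives) and then exhibits $K^6$ minors by explicit branch sets. The paper's one packaging trick is to present each witness as a partition of all $13$ vertices into six connected sets that are pairwise adjacent except for a single pair, so that one partition simultaneously eliminates every non-edge joining those two sets; eight such partitions cover all $26$ cases, and only $K^6$ is ever needed, never $K_{3,3,1}$. So that half of your plan is sound and would converge to the same proof.

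The genuine gap is in the linklessness half. Your proposed construction --- take a maximal planar subgraph $H$, draw it on a sphere, and route the remaining chords through faces --- does not come with any guarantee of flatness, and the edge-count heuristic ($31 < 3\cdot 13-6 = 33$, so ``ample room'') is not evidence of anything: the Petersen graph has only $15$ edges on $10$ vertices, far below $3n-6$, yet is intrinsically linked. Verifying flatness ``cycle by cycle'' for a $13$-vertex, $31$-edge graph is not a feasible fallback, and checking the absence of all seven Petersen-family minors directly is a much larger computation than the maximality casework you are trying to avoid. What is actually needed is a structural reason the embedding is linkless, and the paper supplies one: $G-\{P,T\}$ is planar, so embed it in a horizontal plane with $P$ above and $T$ below. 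Any link must then have one cycle containing both $P$ and $T$ (two cycles split between the apexes lie in opposite closed half-spaces and cannot link), forcing the second cycle to lie in the plane and separate two neighbors of $T$ among $B,Q,R,S,B'$; a short combinatorial check shows no cycle of the planar part does this. Without identifying this two-apex structure, or some substitute for it, your linklessness argument does not close.
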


\begin{figure}[H]
    \center{\includegraphics[width=360pt]{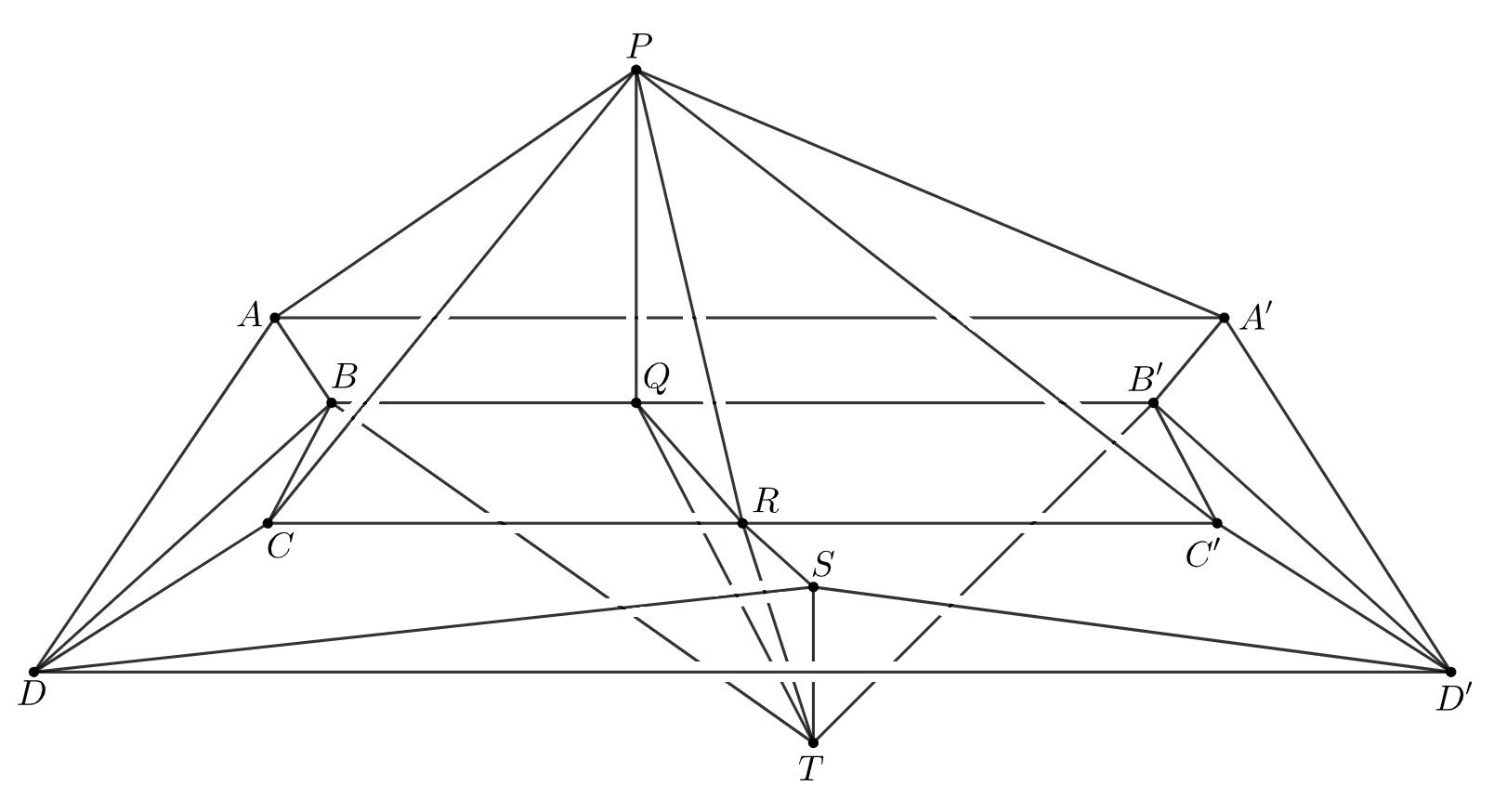}}
    \caption{The graph $G$}
    \label{fig:graph}
\end{figure}

\begin{proof} In the embedding above, $G\backslash\{P,T\}$ is embedded on a horizontal plane with $P$ above the plane and $T$ below. Suppose this embedding were not linkless. Then (since the standard embedding of apex graphs are linkless) $G$ must contain a link passing through both $P$ and~$T$. It is impossible for two cycles to be linked with one containing $P$ and one containing~$T$, for then one would lie (non-strictly) above the plane and the other below. So if the embedding is linked, one cycle must contain both $P$ and~$T$. 

Hence the planar component $G\backslash\{P,T\}$ would have to contain a cycle, and the other cycle would have to pass through this plane at two points, one within the cycle and one without. Such a cycle would have to separate two points from among $T$'s neighbors, or $B,Q,R,S,B'$. Clearly there are no cycles separating any pair from $B,Q,B',R$, so it must separate $S$ from one of $B,Q,B'$, so the planar cycle must include $D$ and $D'$. But then the nonplanar cycle would have to include $R$, meaning the planar cycle would separate $R$ from $B,Q,$ or $B'$, which is impossible. Hence no link exists and our embedding of $G$ is linkless.

We shall now show that $G$ is maximal. In particular, we shall show that adding any edge creates a $K^6$ minor. Observe that the graph contains an automorphism by swapping $A, B, C, D$ with $A', B', C', D'$ respectively and fixing $P,Q,R,S,T$. In this way, we can use symmetry to limit the number of edges we must consider from 47 to 26. If we partition the 13 vertices into six sets where the vertices within each set are connected and every pair of sets are connected by an edge except for one pair, then we can conclude that no edge can be added between those two sets. Consider eight different ways of partitioning the vertices of $G$ below; in each every pair of vertex sets are connected except for the first two:  

\begin{center}\begin{tabular}{|c|cccccc|c|}
\hline
Case & & & Partition & & & & Eliminated Edges \\
\hline
1 & $\bf{ABCD}$ & $\bf{B'C'}$ & $STD'$ & $PA'$ & $Q$ & $R$ & $AB',AC',BB',BC',CC',BD',CD'$ \\\hline
2 & $\bf{DS}$ & $\bf{PQ}$ & $ABA'$ & $TB'$ & $CR$ & $C'D'$ & $DP,DQ,SP,SQ$ \\\hline
3 & $\bf{CRST}$ & $\bf{A}$ & $DC'D'$ & $BQ$ & $A'B'$ & $P$ & $AC,AR,AS,AT$ \\\hline
4 & $\bf{BST}$ & $\bf{PC'}$ & $QRB'$ & $AA'$ & $CD$ & $D'$ & $BP,PS,PT,BC',CS,CT$\\\hline
5 & $\bf{AD}$ & $\bf{QR}$ & $BST$ & $CP$ & $A'B'$ & $C'D'$ & $AQ,AR,DQ,DR$\\\hline
6 & $\bf{AB}$ & $\bf{RC'D'}$ & $DST$ & $CP$ & $A'B'$ & $Q$ & $AR,BR,AC',AD',BC',BD'$\\\hline
7 & $\bf{CDS}$ & $\bf{QB'}$ & $ABA'$ & $RT$ & $C'D'$ & $P$ & $CQ,BC',DQ,QS,BD',BS$\\\hline
8 & $\bf{TB'C'}$ & $\bf{CD}$ & $AA'D'$ & $BQ$ & $RS$ & $P$ & $BC',BD',CC',CD',CT,DT$\\\hline
\end{tabular}\end{center}

Below is a list of the 26 edges we must consider, and which case shows that adding them creates a $K^6$ minor.

\begin{center}\begin{tabular}{|c|c||c|c|}
\hline
Edge            &Case(s)&   Edge            &Case(s)\\
\hline
$AC(\is A'C')$  &3      &   $BS(\is B'S)$   &7\\ 
$AB'(\is A'B)$  &1      &   $CC'$           &1,8\\ 
$AC'(\is A'C)$  &1,6    &   $CD'(\is C'D)$  &1,8\\ 
$AD'(\is A'D)$  &6      &   $CQ(\is C'Q)$   &7\\ 
$AQ(\is A'Q)$   &5      &   $CS(\is C'S)$   &4\\ 
$AR(\is A'R)$   &3,5,6  &   $CT(\is C'T)$   &4,8\\ 
$AS(\is A'S)$   &3      &   $DP(\is D'P)$   &2\\ 
$AT(\is A'T)$   &3      &   $DQ(\is D'Q)$   &2,5,7\\
$BB'$           &1      &   $DR(\is D'R)$   &5\\
$BC'(\is B'C)$  &1,4,6,7,8& $DT(\is D'T)$   &8\\ 
$BD'(\is B'D)$  &1,6,7,8&   $PS$            &2,4\\
$BP(\is B'P)$   &4      &   $PT$            &4\\
$BR(\is B'R)$   &6      &   $QS$            &2,7\\
\hline
\end{tabular}\end{center}







Therefore, adding any edge will make the graph contain a $K^6$-minor; hence adding any edge will make the graph intrinsically linked. So $G$ is maximally linkless.\end{proof}

Now that we have constructed $G$, we will use it to construct a class of maximally linkless graphs with asymptotically few edges.

\begin{thm} There exist maximally linkless graphs with $n$ vertices and $m\le \frac{14}{5}n$ edges for arbitrarily high $n$. \end{thm}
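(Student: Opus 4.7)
The plan is to build arbitrarily large maximally linkless graphs by iteratively attaching copies of (or modules derived from) the graph $G$ from Theorem~2.1. Since $G$ itself already has ratio $31/13 < 14/5$, the overall edge-to-vertex ratio will remain bounded by $14/5$ so long as each attachment contributes new vertices and new edges in a ratio at most $14/5$; concretely, if $G_k$ is obtained from $G_{k-1}$ by adding $a$ new vertices and $b$ new edges with $b/a \le 14/5$, then $(31+kb)/(13+ka) \le 14/5$ for every $k \ge 0$, and the family $\{G_k\}$ realizes the desired bound on arbitrarily many vertices.

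The key design step is to identify an attachment interface $S \subseteq V(G)$ and a module that glues along $S$ so that (i) the resulting graph admits a flat embedding extending the one of $G$, and (ii) every non-edge across the interface creates a $K^6$ minor in the extended graph. A natural candidate is to take a second copy of $G$ and identify a small subset of $s$ corresponding vertices, sharing $e$ existing edges, with $s$ and $e$ chosen so that $14s - 5e \le 27$ (the inequality equivalent to the ratio constraint). The new copy would be drawn on the opposite side of a hyperplane through $S$ from the existing embedding, with its planar piece on its own plane and its apexes above and below, so that linklessness follows by exactly the separating-plane argument of Theorem~2.1: any hypothetical link forces a cycle to cross the separating plane, and the partition-type analysis rules such a cycle out.

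Maximality would then be proved by induction on $k$. If a non-edge $uv$ lies inside a single copy of $G$ within $G_k$, the $K^6$ minor is inherited from Theorem~2.1 applied to that copy. If $u$ and $v$ lie in different copies, one assembles a $K^6$ minor by combining branch sets from each copy routed through the shared interface $S$, exploiting the fact that $S$ already appears in many of the eight partitions listed in Theorem~2.1. Finally, one tallies vertices and edges and applies the ratio inequality above.

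The main obstacle will be the cross-copy maximality argument. For every pair of vertices in different copies, the new edge together with the interface $S$ must yield a $K^6$ minor, which forces $S$ to carry substantial internal connectivity on both sides; this is in direct tension with the ratio constraint $14s - 5e \le 27$ that keeps the graph sparse enough. Choosing a specific interface (likely of size $3$ to $5$ vertices with several shared edges) and checking the $K^6$ minor for every cross pair is the heart of the construction, analogous in spirit and scope to the case table used in the proof of Theorem~2.1. Once such an interface is pinned down, the embedding and the arithmetic are routine.
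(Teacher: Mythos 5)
Your high-level construction coincides with the paper's: glue many copies of $G$ along a small shared interface, and observe that an interface of $s$ vertices carrying $e$ shared edges keeps the ratio below $\frac{14}{5}$ exactly when $14s-5e\le 27$. The paper takes the interface to be a triangle ($s=e=3$, meeting your inequality with equality), giving $n=3+10k$ and $m=3+28k$. But your proposal stops short of the two steps that actually constitute the proof, and the routes you sketch for them are off target. For linklessness you appeal to ``the separating-plane argument of Theorem~2.1'' and its ``partition-type analysis,'' but that analysis is specific to the apexes $P$ and $T$ inside a single copy of $G$ and says nothing about cycles that wander between copies. The argument that works is purely about the interface: because each copy is flatly embedded, the triangle bounds a disk and the copies can be placed on opposite sides of its plane; any cycle not contained in a single copy must pass through the triangle at least twice, i.e.\ use at least two of its three vertices, so two \emph{disjoint} cycles cannot both do this. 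Hence in any candidate link one cycle lies entirely in one copy, and the link would already be present in that copy's linkless embedding. This is why the interface being a triangle matters beyond the arithmetic, and it is not a consequence of anything in the proof of Theorem~2.1.

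For cross-copy maximality you propose assembling a $K^6$ minor ``by combining branch sets from each copy routed through the shared interface,'' and you correctly flag this as the main obstacle --- but you leave it unresolved, and it is the heart of the theorem. The paper's resolution is a short contraction trick that needs no new case analysis: if $u$ and $v$ lie in different copies, contract the entire copy containing $v$ onto a single vertex; that vertex is adjacent to all three triangle vertices, so contracting the new edge $uv$ makes $u$ adjacent to the whole triangle. Since $u$ was not already adjacent to all three triangle vertices, the result contains a copy of $G$ plus at least one extra edge as a minor, which contains $K^6$ by Theorem~2.1. The cross-copy case thus reduces to the single-copy case rather than requiring a fresh minor-hunting argument across the interface. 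Without this step (or some substitute for it), your proposal remains a plan rather than a proof: the ratio arithmetic and the choice of interface are the routine parts, and the parts you defer are exactly the content.
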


\begin{proof} Consider two flat embedded graphs $G_1,G_2$ which each contain triangles. Since there exists a disk with these triangles as its boundary in each of $G_1, G_2$, we can topologically move the edges so that the triangle is very small, then pull it out so that the triangle lies on a plane and the rest of the graph lies entirely on one side. Then we can form a combined graph by letting the triangles coincide, with $G_1$ entirely above the plane and $G_2$ entirely below (as in~\cite{KKM}). Note that if a cycle does not lie entirely within either $G_1$ or $G_2$, then it must pass through the triangle at least twice. Since the triangle has only three points, in any pair of cycles, one lies entirely within one of the original graphs. Hence if there exist linked cycles in the combined graph, there exist linked cycles within one of the original embeddings, a contradiction. Hence the graph formed by combining $G_1$ and $G_2$ upon a common triangle is again linkless.

Let $H$ be the graph formed by combining $k$ copies of $G$ along a common triangle. By the above, $H$ is also linkless. Suppose that this graph were not maximally linkless, and that we could add some edge $uv$ to it. Since each copy of $H$ is maximally linkless, $uv$ cannot be entirely within one copy of $H$, so each of $u,v$ are in different copies of $G$. Since $u$ is not adjacent to all the vertices of the common triangle, we could contract the copy of $G$ containing $v$ down to a single point, then contract the edge $uv$. This would create a copy of $G$ with an additional edge, which is impossible; hence $H$ is also maximally linkless.

Observe that $H$ has $n = 3 + 10k$ vertices and $m=3+28k$ edges. Hence for any value of $n\ge13$ which is $3\mod 10$, we have constructed a maximally linkless graph with $m=3+28(\frac{n-3}{10})= \frac{14}{5}n-\frac{27}{5}$. \end{proof}

Finally, we shall prove a lower bound on the on the number of edges in a maximally linkless graph.

\begin{thm} Let $G$ be a maximally linkless graph with $n$ vertices and $m$ edges. Then $m\ge 2n$.
\end{thm}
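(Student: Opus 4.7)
The plan is to show that every vertex of $G$ has degree at least $4$, provided $n \ge 5$; then the handshake identity $2m = \sum_v \deg(v) \ge 4n$ immediately gives $m \ge 2n$.

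Suppose for contradiction that some vertex $v \in V(G)$ has $d := \deg(v) \le 3$; when $n \ge 5$, $v$ has at least one non-neighbor $u$. To contradict maximality, it suffices to show that $G + uv$ remains linkless. The underlying intuition is that in any flat embedding, the $\le 3$ edges incident to $v$ form at most a tripod, so there is enough topological freedom to attach one additional incident edge at $v$ without creating a link.

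I would argue by cases on $d$. For $d \le 2$, the argument is fairly direct: since $G - v$ is linkless as a subgraph of a linkless graph, it has a flat embedding by~\cite{RST}; place $v$ in a small ball near $u$ disjoint from the rest of the embedding, and draw at most $3$ disjoint arcs from $v$ to its neighbors in $G + uv$. Every new cycle through $v$ uses exactly two of these arcs, yielding at most $\binom{3}{2} = 3$ new cycle types, each of which bounds a disk obtained by a small local perturbation of a disk already present in the flat embedding of $G - v$. For $d = 3$, with $N(v) = \{a, b, c\}$, I would invoke the $Y-\Delta$ transform at $v$: since linklessness is preserved by $Y-\Delta$ (\cite{Sachs}, as noted in the introduction), the auxiliary graph $G' := (G - v) \cup \{ab, bc, ca\}$ is linkless, and the question ``is $G + uv$ linkless?'' reduces to a corresponding edge-addition problem on $G'$, which has one fewer vertex.

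The main obstacle will be the $d = 3$ case: formalizing the correspondence between flat embeddings of $G + uv$ and of $G'$. A natural approach is to split the degree-$4$ vertex $v$ of $G + uv$ into two degree-$3$ vertices joined by an edge (one inheriting the edges to $\{a, b\}$ and the other the edges to $\{c, u\}$), then apply $Y-\Delta$ to each half, reducing to a controlled modification of $G'$. The small cases $n \le 4$ (where the argument's use of a non-neighbor of $v$ breaks down) should be checked by hand.
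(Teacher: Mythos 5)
Your overall strategy---show that every vertex of a maximally linkless graph has degree at least $4$ and then apply the handshake identity---is genuinely different from the paper's, and unfortunately it aims at a stronger statement than the theorem requires, with a gap at its core. The central unjustified step is the claim that if $\deg(v)\le 3$ then $G+uv$ is linkless for an \emph{arbitrary} non-neighbor $u$. This is false as stated. For example, if $\deg(v)=2$ with neighbors $a,b$, then contracting the edge $va$ in $G+uv$ produces $(G-v)+\{ab,au\}$ as a minor; nothing prevents $(G-v)+au$ from being intrinsically linked, in which case $G+uv$ is intrinsically linked no matter how cleverly you draw the arcs. Your supporting picture---that each new cycle through $v$ ``bounds a disk obtained by a small local perturbation of a disk already present''---is where this goes wrong: a cycle using the two arcs $vu$ and $va$ closes up along an $a$--$u$ path in $G-v$ and is a genuinely new cycle of the abstract graph, not a perturbation of an existing one, so it has no disk to inherit. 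The same objection applies with more force to the $d=3$ case, which you acknowledge is incomplete; note also that after splitting and applying $Y$--$\Delta$ the resulting question on $G'$ is not a single edge-addition problem but requires adding several edges incident to $u$, so the reduction does not close.

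The paper avoids all of this by never trying to add an edge at a degree-$2$ or degree-$3$ vertex to a distant $u$. Instead it takes a \emph{minimal counterexample} $H$ with $m<2n$, finds a vertex $v$ with $\deg(v)\le 3$, and shows that the neighborhood of $v$ must already be a clique: for $\deg(v)=2$ one can route an edge between the two neighbors alongside the two edges at $v$, and for $\deg(v)=3$ one applies a $Y$--$\Delta$ transform at $v$. It then deletes $v$; since the neighborhood is a clique, $v$ can be reinserted into any flat embedding (inside the disk bounded by the triangle, or by subdividing a doubled edge), so $H-v$ is again maximally linkless, and it satisfies $m-\deg(v)<2(n-1)$, contradicting minimality. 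Only in the degree-$1$ case does the paper actually add an edge at $v$, and there $u$ is chosen carefully as a neighbor of $v$'s unique neighbor, so that suppressing $v$ shows the new graph is linkless. If you want to salvage your approach, you would either need to prove that maximally linkless graphs on $n\ge 5$ vertices really do have minimum degree $4$ (which your argument does not establish and which does not follow from anything in the paper), or restructure the proof around a minimal counterexample as above.
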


\begin{proof} Suppose for the sake of contradiction that there exist maximally linkless graphs with $m<2n$ edges, and let $H$ be such a graph with minimum number of vertices. Since $2m<4n$, the average degree is less than~$4$, so $H$ contains a vertex $v$ of degree at most~$3$. If $\deg(v)=1$, then $H$ cannot be maximally linkless, as we can connect $v$ to any neighbor of $u$, if $u$ is the original neighbor of $v$. If $\deg(v)=2$, then the two neighbors of $v$ must be connected, as we can draw an edge between them closely following the path of the edges to $v$, so $H-v$ is maximally linkless and has $m-2<2(n-1)$ edges, contradicting the minimality of $H$. If $\deg(v)=3$, then since $Y-\Delta$ transforms preserve linkless graphs, it follows that the three neighbors of $v$ must be connected, so $H-v$ has $m-3<n(2-1)$ edges, again contradicting the minimality of $H$. Hence we have a contradiction for all possible values of $\deg(v)$, so no such graph $H$ exists.
\end{proof}

\section{Conclusion}
We have shown that there exist maximally linkless graphs with $m\le \frac{14}{5}n$ edges, and that for all maximally linkless graphs, $m\ge 2n$. These two results show that the smallest possible asymptotic ratio of the number of edges to the number of vertices in a maximally linkless graph is between $2$ and $\frac{14}{5}=2.8$. It is unknown where the true constant lies within this range.

\section*{Acknowledgements}
The author gratefully acknowledges the support of Carnegie Mellon through their Summer Undergraduate Research Fellowship. In addition, he would like to thank Florian Frick for the excellent advice given throughout the summer research project.

\end{document}